\documentclass[11pt,a4paper]{article}

\usepackage{times}
\usepackage{enumerate}

\usepackage[english]{babel}

\usepackage{verbatim}

\usepackage{color}

\usepackage{a4wide}

\usepackage{amsfonts}

\usepackage{amscd}
\usepackage{tabularx}
\usepackage{arydshln}
\usepackage{verbatim}
\usepackage{color}
\usepackage{a4wide}
 
\usepackage[all]{xy}
\usepackage{mathtools}

\usepackage[all]{xy}

\newtheorem{theorem}{Theorem}[section]

\newtheorem{proposition}[theorem]{Proposition}
\newtheorem{corollary}[theorem]{Corollary}
\newtheorem{Counter-example}[theorem]{Counter-example}
\newtheorem{remark}[theorem]{Remark}

\newenvironment{proof}{\noindent{\it Proof.}}{\hfill$\blacksquare$}

\newtheorem{definition}[theorem]{Definition}
\usepackage{amsmath}

\long\def\symbolfootnote[#1]#2{\begingroup\def\thefootnote{\fnsymbol{footnote}}
\footnote[#1]{#2}\endgroup}

\usepackage{amssymb}

\begin{document}

\def\Q{\mathbb Q}
\def\R{\mathbb R}
\def\N{\mathbb N}
\def\Z{\mathbb Z}
\def\C{\mathbb C}
\def\S{\mathbb S}
\def\L{\mathbb L}
\def\H{\mathbb H}
\def\K{\mathbb K}
\def\X{\mathbb X}
\def\Y{\mathbb Y}
\def\Z{\mathbb Z}
\def\E{\mathbb E}
\def\J{\mathbb J}
\def\I{\mathbb I}
\def\T{\mathbb T}
\def\H{\mathbb H}

\title{Normal tractor conformal bundles and codimension two spacelike submanifolds in Lorentzian manifolds}

\author{Rodrigo Mor\'on$^*$ and Francisco J. Palomo}





\date{}

\maketitle

\begin{abstract}
 
For every codimension two spacelike submanifold of a Lorentz manifold and each choice of a normal lightlike vector field, we introduce a canonical way to construct a tractor conformal bundle.
We characterize when the induced connection of a such submanifold defines a tractor connection
and then, in this case, when this tractor conformal bundle with the induced connection is standard for the induced metric. Finally, the normality condition for this tractor conformal bundle endowed with the induced connection is characterized in terms of a strong relationship between the intrinsic and the extrinsic geometry of the starting spacelike submanifold.
 
\end{abstract}


\hyphenation{Lo-rent-zi-an}

\section{Introduction}

\symbolfootnote[0]{
The second author is partially supported by the Regional Government of Andalusia and ERDEF project PY20-01391 and both of them by Spanish MICINN project PID2020-118452GB-100.
\newline 
$^*$ Corresponding author.

{\it 2020 Mathematical Subjet Classification.} Primary 53C40, 53B15, 53C18 Secondary 53C50.

{\it Keywords.} Conformal metrics, Normal tractor conformal bundles,  Lorentzian geometry,  Spacelike submanifolds, Totally umbilical spacelike submanifolds.

}

 A Riemannian
conformal structure on a manifold $M$ is an equivalence class $c=[g]$ of Riemannian metrics  where two metrics $g$ and $g'$ are said to be equivalent when $g'=e^{2u}g$ for a smooth function $u$ on $M$.
A classical problem in  conformal geometry is the construction of invariants. 
The invariants of any Riemannian metric $g\in c$ transform according to intricate formulas and they do not provide invariants for the conformal structure. The problem of the construction of conformal invariants remains very difficult. Anyway,
  there are several equivalent ways of describing conformal structures which immediately lead to a construction of conformal invariants. For dimension $n\geq 3$, every conformal structure $(M,c)$ admits a unique normal conformal Cartan connection (see for instance \cite[Theor. 1.6.7]{CS09}). 
On the other hand, a tractor conformal bundle $\mathcal{T}\to M$ endowed with a tractor connection $\nabla^{\mathcal{T}}$ describes conformal structures in terms of vector bundles and linear connections, see Section 2. This point of view goes back to the works of Thomas in the 1920s, \cite{Thomas}. Moreover, Fefferman and Graham introduced a new point of view in 1985, \cite{F-G}. They though a conformal structure as a ray bundle $\mathcal{Q}\to M$ and then associated an ambient Lorentzian metric on the manifold $\mathcal{Q}\times (-\varepsilon, +\varepsilon)$ which satisfies certain normalisation conditions. In particular, the embedding of $\mathcal{Q}$ at $\rho=0$ becomes a lightlike hypersurface in $\mathcal{Q}\times (-\varepsilon, +\varepsilon)$. 

The relation between tractor connections on tractor conformal  bundles and conformal Cartan connections was clarified by  \v{C}ap and Gover in \cite{Cap1} in the more general setting of parabolic geometries. There is a one-to-one correspondence between conformal tractor connections on $\mathcal{T}\to M$ and  conformal Cartan connections. The tractor connection which corresponds with the normal conformal Cartan connection is  called the normal tractor connection.
In \cite{Cap}, the normal tractor connection is characterized in terms of  curvature properties. For the sake of completeness, we have included here this result in Theorem \ref{261221A}. 
The same authors have related the ambient construction by Fefferman and Graham with the normal Cartan connection of a conformal structure in \cite{Cap}. In order to get this aim, 
 \v{C}ap and Gover introduce an ambient metric which satisfies a weakening normalisation condition of the original by Fefferman and Graham, Remark \ref{141221B}.

The purpose of this paper is to relate tractor bundles and tractor connections with codimension two  spacelike submanifolds in a Lorentzian manifold. From this point of view, this paper is a continuation of \cite{MP}.
Let us recall the following definitions to explain our point of view here.
\begin{definition}($\cite{Cap}$)
A (Riemannian) tractor conformal bundle on a manifold $M$ with $\mathrm{dim}M =n \geq 2$ is a rank $n+2$ real vector bundle $\mathcal{T}\to M$ endowed with a bundle metric $h$ of Lorentzian signature (i.e., $(n+1,1)$) and  with a distinguished lightlike line subbundle $\mathcal{T}^{1}\subset \mathcal{T}$.
\end{definition}

\begin{definition}($\cite{Burs}, \cite{Cap}$)
A tractor connection $\nabla^{\mathcal{T}}$ on a tractor conformal bundle $\mathcal{T}\to M$ is a linear connection such that 
$\nabla^{\mathcal{T}} h=0$ and the following map $\beta$ is an isomorphism of vector bundles on $M$
\begin{equation}\label{011021}
\begin{array}{rcl}
 TM & \stackrel{\beta\ \ }\longrightarrow & \mathrm{Hom}(\mathcal{T}^{1},(\mathcal{T}^{1})^{\perp}/\mathcal{T}^{1} ) \\    \searrow &   & \ \ \swarrow  \\   & M&
\end{array}
\end{equation}
where $\beta(X_{x})(v)=\nabla^{\mathcal{T}}_{X_{x}}\sigma +\,\mathcal{T}^{1}_{x}$ for $x\in M$, $X_{x}\in T_{x}M$, $v\in \mathcal{T}^{1}_{x}$ and  $\sigma\in \Gamma(\mathcal{T}^{1})$ is any section with $\sigma(x)=v$. 
\end{definition}
Every  nonvanishing section $\sigma$ of $\mathcal{T}^{1}$ provides the vector bundle isomorphism
$$
\beta_{\sigma}\colon TM\to (\mathcal{T}^{1})^{\perp}/\mathcal{T}^{1}, \quad X \mapsto \nabla^{\mathcal{T}}_{X}\sigma+ \mathcal{T}^{1}.
$$
Any other nonvanishing section of $\mathcal{T}^{1}$ is written as $f \sigma$ for a nonvanishing smooth function $f$
on $M$ and the condition $\nabla^{\mathcal{T}} h=0$ implies $\beta_{f\cdot\sigma}= f \cdot \beta_{\sigma}$. Thus, every section $\sigma \in \Gamma(\mathcal{T}^{1})$ produces a Riemannian metric $h^{\sigma}$ on $M$ by means of the formula
\begin{equation}\label{241121B}
h^{\sigma}(X,Y)=h(\beta_{\sigma}(X), \beta_{\sigma}(Y)),
\end{equation}
for $X, Y \in \mathfrak{X}(M)$. Different choices of the section $\sigma$ induce conformally related metrics on $M$ and then, a conformal structure $c$ on $M$.
If we start with a conformal structure $(M,c)$ and the induced conformal structure on $M$ by means of $(\mathcal{T},\mathcal{T}^{1}, h, \nabla^{\mathcal{T}})$ agrees with $c$, we say that $(\mathcal{T}, \mathcal{T}^{1}, h, \nabla^{\mathcal{T}})$  is a standard tractor conformal  bundle for $(M,c)$.

There is a geometric setting where several of the above mentioned objects arise in a natural way. Namely, from every spacelike submanifold $\Psi\colon M^{n}\to (\widetilde{M}^{n+2}, \widetilde{g})$ and each lightlike normal vector field $\xi$, we can construct a {\it tractor conformal bundle} as follows. The vector bundle $\mathcal{T}$  on $M$ is the pull-back via $\Psi$ of the tangent bundle of the manifold $\widetilde{M}$ with bundle metric $\widetilde{g}$
and distinguished lightlike line subbundle $\mathcal{T}^{1}=\mathrm{Spam}\{\xi\}$.
The natural choice for a tractor connection is the induced connection $\widetilde{\nabla}$.

Of course, the induced connection $\widetilde{\nabla}$ is always a metric connection but the map $\beta$ defines an isomorphism of vector bundles if and only if the Weingarten endomorphism $A_{\xi}$ corresponding to the normal vector field $\xi$ is non-singular at every point $x\in M,$ Proposition \ref{241121D}.
Then, the following natural question is about when $(\mathcal{T},\mathcal{T}^{1}, \widetilde{g} , \widetilde{\nabla})$ is a
standard  tractor conformal bundle for the conformal class of the induced metric on $M$.
That is, when the induced metric from $\Psi$ is in the equivalence class of the conformal structure deduced from $(\mathcal{T},\mathcal{T}^{1}, \widetilde{g} , \widetilde{\nabla})$?
This happens if and only if there is a smooth function $\lambda\in C^{\infty}(M)$ such that $A^{2}_{\xi}= \lambda^{2} \cdot \mathrm{Id}$ (where $\lambda$ is a non-vanishing function on $M$), Proposition \ref{241121D}. There are  two  mutually  disjoint  possibilities in order to the condition $A^{2}_{\xi}= \lambda^{2} \cdot \mathrm{Id}$ holds. Namely, $A_{\xi}= \lambda \cdot \mathrm{Id}$, that is, $\xi$ is an umbilical direction or $M$ is endowed with an almost product structure $P$ compatible with the induced metric and therefore with its conformal class, see Section 3.

Now, our main results are Theorem \ref{251121A} and Corollary \ref{261121A} which characterize when the standard  tractor conformal bundle corresponding to a spacelike submanifold $\Psi\colon M^{n}\to (\widetilde{M}^{n+2}, \widetilde{g})$ and $\xi$ as above is normal. The normality condition on $(\mathcal{T},\mathcal{T}^{1}, \widetilde{g} , \widetilde{\nabla})$ is stated in terms of relationships between the extrinsic and intrinsic geometry of the spacelike submanifold. Theorem \ref{251121A} deals with the general case $A^{2}_{\xi}= \lambda^{2} \cdot \mathrm{Id}$ and  Corollary \ref{261121A} with the umbilical one. Although, the normality condition for a tractor connection was stated for $\mathrm{dim}M=n\geq 3$, the curvature properties in Theorem \ref{261221A} have sense for $n\geq 2$. 
The main results of this paper can be summarized as follows.

\begin{quote}
Let $\Psi:M^n\rightarrow \widetilde{M}^{n+2}$ be a spacelike submanifold in a Lorentzian manifold with induced metric $g$ and $\xi\in \mathfrak{X}^{\perp}(M^n)$ a lightlike vector field. Let us consider $(\mathcal{T},\mathcal{T}^{1}, \widetilde{g} , \widetilde{\nabla})$. Then,
\begin{enumerate}
    \item The induced connection $\widetilde{\nabla}$ is tractor connection if and only if the Weingarten endomorphism $A_{\xi}$ is not singular at every point.
    
    \item $(\mathcal{T},\mathcal{T}^{1}, \widetilde{g} , \widetilde{\nabla})$ is standard for the induced metric $g$ if and only if there is a smooth non-vanishing function $\lambda\in C^{\infty}(M^n)$ such that $A_{\xi}=\lambda^{2}\cdot \mathrm{Id}.$

    \item Assume $A_{\xi}=\lambda \cdot \mathrm{Id}$ and there is a lightlike vector field $\eta \in \mathfrak{X}^{\perp}(M^n)$ such that $\widetilde{g}(\xi, \eta)=-1$. Then $(\mathcal{T}, \mathcal{T}^{1}, \widetilde{g}, \widetilde{\nabla})$ is normal if and only if the following conditions hold
    
    \begin{enumerate}
        \item $\nabla^{\perp}\xi=\frac{1}{\lambda}d\lambda \otimes \xi$, where $\nabla^{\perp}$ denotes the normal connection.

        \item For every $X,Y\in \mathfrak{X}(M)$, the  Ricci tensor of $g$ satisfies
$$
\mathrm{Ric}(X, Y)=\frac{n}{2}\| \mathbf{H}\|^{2}\,g(X,Y)- (n-2) \widetilde{g}(\mathbf{H}, \xi) g(X, A_{\eta}Y),
$$
where $\mathbf{H}$ is the mean curvature vector field of $\Psi\colon M^n \to \widetilde{M}^{n+2}.$
    \end{enumerate}
\end{enumerate}

\end{quote}

As a direct consequence when $(\mathcal{T}, \mathcal{T}^{1}, \widetilde{g}, \widetilde{\nabla})$ is normal, the scalar curvature $S$ of the induced metric $g$ satisfies $S=n(n-1)\| \mathbf{H}\|^{2}$ and $(M,g)$ is Einstein if and only if $\Psi:M^n\rightarrow \widetilde{M}^{n+2}$ is totally umbilical.

The paper ends with an application to Lorentzian warped  products 
$$
\left(\widetilde{M}^{n+2}=B\times M^{n}, \widetilde{g}=\pi_{B}^{*}(g_{B}) + \varphi(\rho, s)^{2}\,\pi_{M}^{*}(g)\right),
$$
where $(M^{n},g)$ is a Riemannian manifold and  $B \subset \R^{2}$  an open subset with natural coordinates $(\rho, s)$ endowed with a Lorentzian metric $g_{B}$ such that there are two lightlike  vector fields fields  $\xi, \eta \in \mathfrak{X}(B)$ with $g_{B}(\xi, \eta)=-1$. If we assume there is $(\rho_{0}, s_{0})\in B$ such that $ \varphi(\rho_{0}, s_{0})=1$, the spacelike immersion $\Psi \colon M^{n} \to \widetilde{M}^{n+2}$ given by $\Psi(x)=(\rho_{0},s_{0}, x)$ satisfies $\Psi^{*}(\widetilde{g})=g$.
Assume  $\xi_{(\rho_{0}, s_{0})} \varphi \neq 0$. Then, the corresponding tractor conformal bundle is always standard for the conformal class of the induced metric and it is normal if and only if $(M^{n},g)$ is an Einstein manifold with 
$\mathrm{Ric}=\Lambda\, g$.  In such a case, we have  
$
\Lambda=(n-1)\|\mathbf{H}\|^{2},
$
Proposition \ref{131221A}.

\section{Preliminaries}\label{Prel}

  In this section we fix some terminology and notations for spacelike immersions in Lorentzian manifolds, that is, immersions of a manifold $M$ in a Lorentzain manifold $(\widetilde{M}, \widetilde{g})$ such that the induced metric on $M$ is Riemannian.
This Section ends with the result which states the normality condition for a standard tractor conformal bundle for a conformal structure $(M, c)$, \cite{Cap}.

Let $(\widetilde{M}, \widetilde{g})$ be an $m$-dimensional Lorentzian manifold. A smooth immersion  $\Psi:M^n\rightarrow \widetilde{M}^m$ of a (connected) $(n\geq 2)$-dimensional  manifold $M$  is said to be a spacelike when the induced metric $g:= \Psi^{*}(\widetilde{g})$ is Riemannian. 
Let $\widetilde{\nabla}$ and $\nabla$ be the Levi-Civita
connections of  $\widetilde{M}$ and $M$, respectively. Let
$\nabla^{\perp}$ be the connection on the normal bundle. The Gauss
and Weingarten formulas are
$$\widetilde{\nabla}_X Y=\nabla_XY + \mathrm{II}(X,Y)
\, \quad \mathrm{and} \, \quad
\widetilde{\nabla}_X\xi=-A_{\xi}X+\nabla^{\perp}_X\,\xi,$$
for any $X,Y \in \mathfrak{X}(M^{n})$ and $\xi \in
\mathfrak{X}^{\perp}(M^{n})$, where $\mathrm{II}$ denotes the
second fundamental form and $A_{\xi}$ the shape (or Weingarten) operator corresponding to $\xi$. As usual, we have agreed to ignore the differential of the map $\Psi$ when no confusion can arise.
The shape operator $A_{\xi}$
corresponding to $\xi$ is related to $\mathrm{II}$ by
$$g( A_{\xi}X, Y ) = g( \mathrm{II}(X,Y), \xi
),$$
for all $X,Y \in \mathfrak{X}(M^{n})$. In particular, $A_{\xi}$ is $g$-self adjoint.
The mean curvature vector field of $\Psi$ is given by
$\mathbf{H}=\frac{1}{n}\,\mathrm{trace}_{_{g}}\mathrm{II}.$

\smallskip

From now on we make the assumptions $m=n+2$ and there is a lightlike normal vector field $\xi \in
\mathfrak{X}^{\perp}(M^{n})$ along $\Psi$. 
Under these assumptions, there is a natural choice of a tractor conformal bundle on $M$ as follows.
The vector bundle is $\mathcal{T}\to M$ where $\mathcal{T}=\Psi^{*}(T\widetilde{M})$ is the
 pull-back via $\Psi $ of the tangent bundle $T\widetilde{M}\to \widetilde{M}$. The Lorentzian bundle metric $h$
is the metric $\widetilde{g}$ and the distinguished lightlike line subbundle $\mathcal{T}^{1}=\mathrm{Span}(\xi).$ 
Let $\overline{\mathfrak{X}}(M)$ be the $C^{\infty}(M)-$module of vector fields along the immersion $\Psi$. 
If we also denote by $\widetilde{\nabla}\colon \mathfrak{X}(M)\times \overline{\mathfrak{X}}(M)\to \overline{\mathfrak{X}}(M)$  the induced connection of $\Psi:M^n\rightarrow \widetilde{M}^m$,
it is well-known that $\widetilde{\nabla}\widetilde{g}=0.$ 
Taking into account that $\Psi:M^n\rightarrow \widetilde{M}^{n+2}$ is a codimension two spacelike immersion, there is $\omega \in \Omega^{1}(M, \R)$ such that  $\nabla^{\perp}\xi= \omega \otimes \xi$. 

If we assume there is another  lightlike normal vector field $\eta \in \mathfrak{X}^{\perp}(M^{n})$
such that  $\widetilde{g}(\xi,\eta)=-1$.  Then, for every $X,Y\in\mathfrak{X}(M)$, the second fundamental form can be written as
\begin{equation}\label{270221B}
\mathrm{II}(X,Y)=-g(A_{\eta}X,Y)\xi-g(A_{\xi}X,Y)\eta,
\end{equation}
and  the mean curvature vector field satisfies
\begin{equation}\label{H}
\mathbf{H}=-\dfrac{1}{n}\left(\mathrm{trace}\,(A_{\eta})\xi+\mathrm{trace}\,(A_{\xi})\eta\right).
\end{equation}

Our assumption on the curvature sign is $R(X,Y)Z=\nabla_{X}\nabla_{Y}Z-\nabla_{Y}\nabla_{X}Z-\nabla_{[X,Y]}Z$.  Let us recall the Gauss equation  (e.g., \cite[Chap. 3]{One83}).  Observe that our assumption on the sign of $R$ is the opposite to \cite{One83}.
\begin{equation}\label{241121C}
g(R(X,Y)Z, V)=\widetilde{g}(\widetilde{R}(X,Y)Z, V)- \widetilde{g}(\mathrm{II}(X,Z), \mathrm{II}(Y,V))+ \widetilde{g}(\mathrm{II}(X,V), \mathrm{II}(Y,Z)),
\end{equation}
for any $X,Y,Z, V\in \mathfrak{X}(M).$

\bigskip
For the sake of completeness, let us recall the following result \cite{Cap}.

\begin{theorem}\label{261221A}$(\cite{Cap})$
Let $(\mathcal{T}, \mathcal{T}^{1}, h)$ be a standard tractor conformal bundle for a conformal structure $(M, c)$ with $\mathrm{dim }\,M=n\geq 3$. A tractor connection $\nabla^{\mathcal{T}}$ is normal if and only if  the following conditions hold, 
\begin{enumerate}
\item Its curvature $R^{\mathcal{T}}$ satisfies
$
R^{\mathcal{T}}(X, Y)\sigma\subset \Gamma(\mathcal{T}^{1}),
$
for every $X, Y \in \mathfrak{X}(M)$ and $\sigma \in \Gamma(\mathcal{T}^{1})$. Thus, $R^{\mathcal{T}}(X, Y)$ induces an endomorphismon  $(\mathcal{T}^{1})^{\perp}/\mathcal{T}^{1}$. In fact, by means of any section $\sigma\in \Gamma(\mathcal{T}^{1})$, we can consider $W\in \Gamma(\Lambda^{2}T^{*}M \otimes L(TM, TM))$ as follows
$$
W(X, Y )Z=(\beta_{\sigma})^{-1}\Big( R^{\mathcal{T}}(X, Y)\beta_{\sigma}(Z)\Big),\quad  X,Y, Z \in \mathfrak{X}(M).
$$
Note that $W$ does not depend on the choice of $\sigma \in \Gamma(\mathcal{T}^{1}).$
\item The Ricci type contraction of  $W\in \Gamma(\Lambda^{2}T^{*}M \otimes L(TM, TM))$
vanishes on $M$. This condition is equivalently stated as follows
$$
\sum_{i=1}^{n}h^{\sigma}\Big(W(E_{i}, X )Y, E_{i} \Big)=\sum_{i=1}^{n}h\Big( R^{\mathcal{T}}(E_{i},X)\beta_{\sigma}(Y), \beta_{\sigma}(E_{i})\Big)=0,
$$
for any $X,Y \in \mathfrak{X}(M)$,
where $(E_{1}, \dots , E_{n})$ is a local orthonormal  frame  with respect to the Riemannian metric $h^{\sigma}\in c$ as in $(\ref{241121B})$ for any $\sigma \in \Gamma(\mathcal{T}^{1}).$

\end{enumerate}

\end{theorem}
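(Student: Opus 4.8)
The plan is to reduce the normality of $\nabla^{\mathcal{T}}$ to the algebraic condition $\partial^{*}\kappa=0$ of parabolic geometry and then unwind this condition into the two stated curvature properties; this is exactly the route of \v{C}ap and Gover, and I would reconstruct it as follows. By the equivalence between tractor connections and Cartan connections, $\nabla^{\mathcal{T}}$ is normal precisely when the curvature $\kappa$ of the associated Cartan connection is coclosed for Kostant's codifferential. Because $\nabla^{\mathcal{T}}h=0$, each $R^{\mathcal{T}}(X,Y)$ is an $h$-skew endomorphism of $\mathcal{T}$, so $\kappa=R^{\mathcal{T}}$ is a $2$-form with values in the adjoint tractor bundle $\mathcal{A}M=\mathfrak{so}(\mathcal{T})$, and $\partial^{*}\colon \Lambda^{2}T^{*}M\otimes\mathcal{A}M\to T^{*}M\otimes\mathcal{A}M$ is the bundle map induced fibrewise by the codifferential of $\mathfrak{so}(n+1,1)$.

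Next I would exploit the $|1|$-grading. The flag $\mathcal{T}^{1}\subset(\mathcal{T}^{1})^{\perp}$ filters $\mathfrak{so}(\mathcal{T})$ with associated graded $TM\oplus\mathfrak{co}(TM)\oplus T^{*}M$ (degrees $-1,0,1$), so I decompose $\kappa=\kappa_{-1}+\kappa_{0}+\kappa_{1}$ according to the degree of its values. Since $\mathfrak{g}_{1}\cong T^{*}M$ is abelian, $\partial^{*}$ is the Lie-algebra homology boundary $\partial^{*}(Z_{1}\wedge Z_{2}\otimes A)=Z_{2}\otimes[Z_{1},A]-Z_{1}\otimes[Z_{2},A]$, which raises the degree of the value by one. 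Hence $\partial^{*}\kappa=0$ splits into the independent equations $\partial^{*}\kappa_{-1}=0$ (values in $\mathfrak{g}_{0}$) and $\partial^{*}\kappa_{0}=0$ (values in $\mathfrak{g}_{1}$), while $\partial^{*}\kappa_{1}$ lands in $\mathfrak{g}_{2}=0$ and carries no information.

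For the first equation I would show that $\partial^{*}$ is injective on the homogeneity-one part, so that $\partial^{*}\kappa_{-1}=0$ forces $\kappa_{-1}=0$. This is where $n\geq 3$ enters: in this homogeneity there are no $3$-cochains (because $\mathfrak{g}_{-2}=0$) and, by Kostant's theorem, no harmonic $2$-cochains (the conformal harmonic curvature lives in homogeneity $\geq 2$ --- the Weyl curvature in degree $2$, the Cotton tensor in degree $3$), so the kernel of $\partial^{*}$ there is trivial. The vanishing $\kappa_{-1}=0$ says exactly that every $R^{\mathcal{T}}(X,Y)$ takes values in the stabiliser $\mathfrak{p}$ of $\mathcal{T}^{1}$, i.e. preserves $\mathcal{T}^{1}$; this is condition (1), and it is what lets $R^{\mathcal{T}}(X,Y)$ descend to the scale-independent endomorphism $W(X,Y)$ of $(\mathcal{T}^{1})^{\perp}/\mathcal{T}^{1}$.

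Finally, granted $\kappa_{-1}=0$, one has $\partial^{*}\kappa=\partial^{*}\kappa_{0}$, a $T^{*}M\otimes T^{*}M$-valued object obtained by contracting one form-slot of $\kappa_{0}$ against its $\mathfrak{g}_{0}$-value through the $\mathfrak{g}_{0}$--$\mathfrak{g}_{1}$ bracket; writing this out in a local orthonormal frame $(E_{1},\dots,E_{n})$ for $h^{\sigma}$ reproduces the trace $\sum_{i}h^{\sigma}(W(E_{i},X)Y,E_{i})$, which is condition (2). I expect this last identification to be the main obstacle, for two reasons. First, $\mathfrak{g}_{0}=\mathfrak{so}(TM)\oplus\mathbb{R}$ is reducible, and one must check that the contraction of the $\mathfrak{so}$-part together with the scalar (density-curvature) part assembles into precisely the single stated trace rather than some proper sub-tensor of it. Second, one must invoke the algebraic Bianchi identity carried by $\kappa$ to control the antisymmetric part of the contraction, so that $\partial^{*}\kappa_{0}=0$ really is equivalent to the \emph{full} vanishing of the Ricci-type contraction of $W$; the scale-independence of the whole statement then follows from $\beta_{f\sigma}=f\,\beta_{\sigma}$ and the fact that $W$ does not depend on $\sigma$.
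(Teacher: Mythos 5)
The paper itself offers no proof of this statement --- it is recalled verbatim from \v{C}ap--Gover \cite{Cap} --- so the only comparison available is with that source, and your sketch reconstructs exactly its argument: normality as $\partial^{*}\kappa=0$ for the associated Cartan curvature, the splitting of this equation by homogeneity of the values, the vanishing of the homogeneity-one part via Kostant's theorem together with $\mathfrak{g}_{-2}=0$ (which is where $n\geq 3$ enters and which yields condition (1)), and the frame computation identifying $\partial^{*}\kappa_{0}$ with the Ricci-type contraction of $W$ (condition (2)); all of these steps are correctly identified and correctly justified. Your one stated worry at the end is unfounded: since $\mathfrak{g}_{1}$ is abelian and brackets with $\mathfrak{g}_{1}$ act on $\mathfrak{g}_{0}$-values by composition, one gets directly $(\partial^{*}\kappa_{0})(X)(Y)=\sum_{i} h^{\sigma}\bigl(W(E_{i},X)Y,E_{i}\bigr)$ as an identity of (not necessarily symmetric) bilinear forms, so its vanishing is literally condition (2) and no appeal to the Bianchi identity is required.
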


\section{Main results}

\begin{proposition}\label{241121D}
Let  $\Psi:M^n\rightarrow (\widetilde{M}^{n+2}, \widetilde{g})$ be a spacelike immersion and consider  $(\mathcal{T},\mathcal{T}^{1}, \widetilde{g} , \widetilde{\nabla})$ as above. Then,
$\widetilde{\nabla}$ is a tractor connection if and only if the Weingarten endomorphism $A_{\xi}$ is non-singular at every point $x\in M.$
In this case, the induced metric from $\Psi$ is in the equivalence class of the conformal structure deduced from the tractor connection $\widetilde{\nabla}$ if and only if there is a smooth function $\lambda\in C^{\infty}(M)$ such that $A^{2}_{\xi}= \lambda^{2} \cdot \mathrm{Id}$ (where $\lambda$ is a non-vanishing function on $M$).
\end{proposition}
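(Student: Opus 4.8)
The plan is to reduce both assertions to a single computation of the bundle map $\beta$ through the Weingarten formula. First I would fix the section $\sigma=\xi\in\Gamma(\mathcal{T}^{1})$ and identify the target bundle $(\mathcal{T}^{1})^{\perp}/\mathcal{T}^{1}$ with $TM$. Since $\xi$ is a lightlike \emph{normal} field, its $\widetilde{g}$-orthogonal complement inside $\mathcal{T}=\Psi^{*}(T\widetilde{M})$ is $(\mathcal{T}^{1})^{\perp}=TM\oplus\mathrm{Span}(\xi)$, so the projection onto the tangent part gives a canonical isomorphism $(\mathcal{T}^{1})^{\perp}/\mathcal{T}^{1}\cong TM$. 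Moreover, because $\mathcal{T}^{1}$ is lightlike and $\xi$ is normal, the bundle metric $\widetilde{g}$ descends to a well-defined metric on the quotient (adding a multiple of $\xi$ to a tangent representative changes nothing), and under the above identification this descended metric is precisely the induced metric $g$.

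Next I would compute $\beta_{\xi}$. The Weingarten formula gives $\widetilde{\nabla}_{X}\xi=-A_{\xi}X+\nabla^{\perp}_{X}\xi=-A_{\xi}X+\omega(X)\,\xi$, so working modulo $\mathcal{T}^{1}=\mathrm{Span}(\xi)$ the term $\omega(X)\,\xi$ drops out and $\beta_{\xi}(X)=-A_{\xi}X+\mathcal{T}^{1}$. Under the identification above, $\beta_{\xi}$ is therefore (up to sign) the Weingarten endomorphism $A_{\xi}$ itself. Since $\widetilde{\nabla}\widetilde{g}=0$ holds automatically for the induced connection, the only condition for $\widetilde{\nabla}$ to be a tractor connection is that $\beta$ be a bundle isomorphism, and this happens if and only if $A_{\xi}$ is invertible at every point of $M$. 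This settles the first assertion.

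For the second assertion I would compute the metric $h^{\xi}$ produced by $\xi$ via $(\ref{241121B})$. Using $\beta_{\xi}(X)=-A_{\xi}X$ together with the fact that the descended quotient metric is $g$, one gets $h^{\xi}(X,Y)=\widetilde{g}(A_{\xi}X,A_{\xi}Y)=g(A_{\xi}X,A_{\xi}Y)$, and the $g$-self-adjointness of $A_{\xi}$ turns this into $h^{\xi}(X,Y)=g(A_{\xi}^{2}X,Y)$; note this is indeed Riemannian since $A_{\xi}$ is non-singular. As the conformal class $c$ deduced from the tractor data is represented by $h^{\xi}$, the induced metric $g$ lies in $c$ if and only if $h^{\xi}=\mu\,g$ for some positive smooth $\mu$, equivalently $A_{\xi}^{2}=\mu\cdot\mathrm{Id}$. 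Because $A_{\xi}$ is invertible, $A_{\xi}^{2}$ is positive definite, so $\mu>0$ and $\lambda:=\sqrt{\mu}$ is a smooth nowhere-vanishing function with $A_{\xi}^{2}=\lambda^{2}\cdot\mathrm{Id}$.

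I do not expect a genuine obstacle: the substance of the statement is the observation that, via the Weingarten formula, $\beta$ \emph{is} the shape operator $A_{\xi}$ and that the tractor metric in the scale $\xi$ is $g(A_{\xi}^{2}\,\cdot\,,\,\cdot\,)$. The only points requiring care are the well-definedness of the quotient metric (which relies on $\mathcal{T}^{1}$ being lightlike and $\xi$ being normal) and the smoothness together with the non-vanishing of $\lambda=\sqrt{\mu}$, both of which are immediate once $A_{\xi}$ is known to be invertible.
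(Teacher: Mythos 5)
Your proof is correct and follows essentially the same route as the paper: computing $\beta_{\xi}(X)=-A_{\xi}X+\mathcal{T}^{1}$ from the Weingarten formula and then identifying $h^{\xi}(X,Y)=g(A_{\xi}^{2}X,Y)$, so that the tractor and standardness conditions become non-singularity of $A_{\xi}$ and $A_{\xi}^{2}=\lambda^{2}\cdot\mathrm{Id}$, respectively. Your added details (the explicit identification $(\mathcal{T}^{1})^{\perp}/\mathcal{T}^{1}\cong TM$ with its descended metric, and the positivity of $A_{\xi}^{2}$ guaranteeing a smooth non-vanishing $\lambda=\sqrt{\mu}$) are sound refinements of points the paper leaves implicit.
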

\begin{proof} As was mentioned, the condition $\widetilde{\nabla}h=0$ always holds.
In our case, by means of the Weingarten formula, the vector bundle homomorphism
$\beta$ given by (\ref{011021}) is determined by
\begin{equation}\label{180921}
\beta(X)(\xi)=\widetilde{\nabla}_{X}\xi +\,\mathcal{T}^{1}=- A_{\xi}(X)+\omega(X)\xi+\,\mathcal{T}^{1}=- A_{\xi}(X)+\,\mathcal{T}^{1},
\end{equation}
where $X\in \mathfrak{X}(M)$. That is, $\beta_{\xi}(X)=- A_{\xi}(X)+\,\mathcal{T}^{1}$. From (\ref{180921}), it is clear that $\beta$ is an isomorphism of vector bundles if and only if $A_{\xi}$ is non-singular at all points of $M$. Moreover, from (\ref{180921}), every section $\sigma=f \, \xi\in\Gamma(\mathcal{T}^1)$ produces the Riemannian metric 
 \begin{equation}\label{011021B}
 f^{2}\,g(A_{\xi}(X),A_{\xi}(Y))= f^{2}\, g(A^{2}_{\xi}(X), Y),
 \end{equation}where $X,Y\in \mathfrak{X}(M)$. Then, the metric given in (\ref{011021B}) belongs to the conformal class of the induced metric $g$ if and only if there is $\lambda\in C^{\infty}(M)$ with $A^{2}_{\xi}=\lambda^{2}\cdot \mathrm{Id}$.
\end{proof}

\begin{definition}
Let  $\Psi:M^n\rightarrow (\widetilde{M}^{n+2}, \widetilde{g})$ be a spacelike immersion and $\xi\in \mathfrak{X}^{\perp}(M^n)$ a fixed lightlike normal vector field.
Assume there is a non-vanishing smooth function $\lambda\in C^{\infty}(M)$ such that $A^{2}_{\xi}= \lambda^{2} \cdot \mathrm{Id}$. The standard tractor conformal bundle with tractor connection $(\mathcal{T}, \mathcal{T}^{1}, \widetilde{g}, \widetilde{\nabla})$ on $M$, where $\mathcal{T}=\Psi^{*}(T\widetilde{M})$ and $\mathcal{T}^{1}=\mathrm{Span}(\xi)$, is said to be the associated to the pair $(\Psi:M^n\rightarrow \widetilde{M}^{n+2}, \xi)$.
\end{definition}

\begin{remark}\label{241121E}
{\rm
For associated tractors to codimension two spacelike immersions as above,  the induced metric satisfies $g=\widetilde{g}^{\frac{1}{\lambda}\xi},
$
according to the notation in (\ref{241121B}).
}
\end{remark}

Under the assumptions of Proposition \ref{241121D} and taking into account that $M$ is assumed to be connected, there are two mutually disjoint possibilities. Namely, $\xi$ is an umbilical direction, that is $A_{\xi}=\lambda\cdot \mathrm{Id}$ on $M$ or the tensor field $P:=\frac{1}{\lambda}\cdot A_{\xi}$ defines an almost product structure (with $P\neq \pm \mathrm{Id}$) on the Riemannian manifold $(M^{n}, g)$. That is, we have $P^{2}= \mathrm{Id}$ and $g(PX, PY)=g(X,Y)$ for any $X,Y \in \mathfrak{X}(M).$

Let us denote by $\mathrm{Ric}$ and $S$ the Ricci tensor and the scalar curvature on $(M^{n},g)$, respectively. For a fixed almost product structure $(M^{n},P,g)$ on a Riemannian manifold, the $P$-Ricci tensor $\mathrm{Ric}^{*}$ and the $P$-scalar curvature $S^{*}$ are defined by (see \cite{Mekerov2008})
$$
\mathrm{Ric}^{*}(X,Y)=\sum_{i=1}^{n}g\left(R(E_{i},X)Y, PE_{i}\right), \quad S^{*}=\sum_{i=1}^{n}\mathrm{Ric}^{*}(E_{i}, E_{i}),\quad X,Y\in \mathfrak{X}(M),
$$
where $(E_{1}, \dots , E_{n})$ is a local orthonormal frame.

In order to shorten the statement of the following result,
we will write $P=\frac{1}{\lambda}A_{\xi}$ in both cases, namely, when $\xi$ is an umbilical direction (i.e., $P=\mathrm{Id}$) and for the (proper) almost product structure. Of course, when $\xi$ is an umbilical direction, we have $\mathrm{Ric}^{*}=\mathrm{Ric}$ and $S^{*}=S$.

\begin{theorem}\label{251121A}
Let  $(\mathcal{T}, \mathcal{T}^{1}, \widetilde{g}, \widetilde{\nabla})$ be the associated tractor to the pair $(\Psi:M^n\rightarrow \widetilde{M}^{n+2}, \xi)$ with $A_{\xi}=\lambda^{2}\cdot \mathrm{Id}$, where $\lambda$ is a non-vanishing function on $M$. Assume there is a lightlike normal vector field $\eta \in \mathfrak{X}^{\perp}(M^n)$ such that $\widetilde{g}(\xi, \eta)=-1$.
Then, the tractor connection $\widetilde{\nabla}$
is normal if and only if the following conditions hold
\begin{enumerate}
 
\item For every $X,Y\in \mathfrak{X}(M)$, we have
\begin{equation}\label{cond1}
(\nabla_{X}A_{\xi})(Y)-(\nabla_{Y}A_{\xi})(X)=\omega(X)A_{\xi} Y-\omega(Y)A_{\xi} X.
\end{equation}

%
%
\item For every $X,Y\in \mathfrak{X}(M)$, the $P$-Ricci tensor satisfies
\begin{equation}\label{cond2}
\mathrm{Ric}^{*}(X, Y)=-\mathrm{trace}(A_{\xi}\circ A_{\eta})\,g(X,PY)- (n-2) \lambda \,g(X, A_{\eta}Y).
\end{equation}

\end{enumerate}
\end{theorem}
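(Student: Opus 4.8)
The plan is to apply the curvature characterization of normality in Theorem \ref{261221A} to the induced connection $\widetilde{\nabla}$ and to translate its two conditions into (\ref{cond1}) and (\ref{cond2}). The starting point is that $\mathcal{T}=\Psi^{*}(T\widetilde{M})$ carries the pull-back connection, so its curvature is the pull-back of the ambient curvature; that is, $R^{\mathcal{T}}(X,Y)s=\widetilde{R}(X,Y)s$ for $X,Y\in\mathfrak{X}(M)$ and $s\in\overline{\mathfrak{X}}(M)$. Since curvature is tensorial in its last argument, condition $(1)$ of Theorem \ref{261221A} is equivalent to $\widetilde{R}(X,Y)\xi\in\mathrm{Span}(\xi)$ for all $X,Y$.

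First I would treat condition $(1)$. Because $\widetilde{\nabla}\widetilde{g}=0$, the endomorphism $\widetilde{R}(X,Y)$ is $\widetilde{g}$-skew-adjoint, so $\widetilde{g}(\widetilde{R}(X,Y)\xi,\xi)=0$; as the normal bundle is spanned by $\xi,\eta$ with $\widetilde{g}(\xi,\eta)=-1$, the $\eta$-component of the normal part of $\widetilde{R}(X,Y)\xi$ equals $-\widetilde{g}(\widetilde{R}(X,Y)\xi,\xi)=0$, so the normal part automatically lies in $\mathrm{Span}(\xi)$. Hence condition $(1)$ reduces to the vanishing of the tangential part. Expanding $\widetilde{R}(X,Y)\xi$ with the Gauss and Weingarten formulas and $\nabla^{\perp}\xi=\omega\otimes\xi$ yields the Codazzi-type identity
\begin{equation*}
(\widetilde{R}(X,Y)\xi)^{\top}=-(\nabla_{X}A_{\xi})Y+(\nabla_{Y}A_{\xi})X+\omega(X)A_{\xi}Y-\omega(Y)A_{\xi}X,
\end{equation*}
whose vanishing is precisely (\ref{cond1}).

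Next, granting condition $(1)$ (so that $R^{\mathcal{T}}(X,Y)$ preserves $\mathcal{T}^{1}$ and $W$ is well defined), I would handle condition $(2)$ using the section $\sigma=\tfrac{1}{\lambda}\xi$, for which $h^{\sigma}=g$ by Remark \ref{241121E}; thus an orthonormal frame $(E_{i})$ for $h^{\sigma}$ is just a $g$-orthonormal frame. From (\ref{180921}) one gets $\beta_{\sigma}(Y)=-PY+\mathcal{T}^{1}$ with $P=\tfrac{1}{\lambda}A_{\xi}$, and choosing the tangential representatives $-PY$ and $-PE_{i}$ turns the Ricci-type contraction into $\sum_{i}\widetilde{g}(\widetilde{R}(E_{i},X)PY,PE_{i})$. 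I would then apply the Gauss equation (\ref{241121C}) to split this into the intrinsic term $\sum_{i}g(R(E_{i},X)PY,PE_{i})$, which is exactly $\mathrm{Ric}^{*}(X,PY)$, plus two second fundamental form contractions that I evaluate with (\ref{270221B}), the relations $\widetilde{g}(\xi,\xi)=\widetilde{g}(\eta,\eta)=0$, $\widetilde{g}(\xi,\eta)=-1$, and $A_{\xi}=\lambda P$. Carrying out these contractions (using $\sum_{i}g(A_{\xi}E_{i},PE_{i})=n\lambda$ and $\sum_{i}g(A_{\eta}E_{i},PE_{i})=\tfrac{1}{\lambda}\mathrm{trace}(A_{\xi}\circ A_{\eta})$) gives
\begin{equation*}
\sum_{i}\widetilde{g}(\widetilde{R}(E_{i},X)PY,PE_{i})=\mathrm{Ric}^{*}(X,PY)+\mathrm{trace}(A_{\xi}\circ A_{\eta})\,g(X,Y)+(n-2)\lambda\,g(A_{\eta}X,PY).
\end{equation*}
Setting this to zero, replacing $Y$ by $PY$ (legitimate since $P^{2}=\mathrm{Id}$), and using the self-adjointness of $A_{\eta}$ produces exactly (\ref{cond2}).

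The main obstacle is the bookkeeping in condition $(2)$: one must be careful with the identification $(\mathcal{T}^{1})^{\perp}/\mathcal{T}^{1}\cong TM$ and with the well-definedness of the contraction (which is why condition $(1)$ is invoked first), and with correctly evaluating the two second fundamental form terms from (\ref{270221B}). The two clean simplifications that make everything fit are the recognition that the intrinsic Gauss term is precisely $\mathrm{Ric}^{*}(X,PY)$ and the final substitution $Y\mapsto PY$, which together convert the vanishing of the Ricci-type contraction into the stated $P$-Ricci identity.
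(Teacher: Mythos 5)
Your proposal is correct and follows essentially the same route as the paper's proof: both reduce normality to the two curvature conditions of Theorem \ref{261221A}, obtain (\ref{cond1}) by expanding $\widetilde{R}(X,Y)\xi$ with the Gauss--Weingarten formulas after observing that the normal part is automatically collinear with $\xi$ (the paper checks this by computing $h\big(-\mathrm{II}(X,A_{\xi}Y)+\mathrm{II}(A_{\xi}X,Y),\xi\big)=0$ directly, you via skew-adjointness of the curvature operator --- the same fact), and obtain (\ref{cond2}) by applying the Gauss equation (\ref{241121C}) to the Ricci-type contraction with the section $\tfrac{1}{\lambda}\xi$, identifying the intrinsic term as $\mathrm{Ric}^{*}(X,PY)$, evaluating the two second-fundamental-form contractions via (\ref{270221B}), and finally substituting $Y\mapsto PY$. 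Your intermediate contraction identities match the paper's (after dividing by $\lambda^{2}$), so there is nothing to correct.
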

\begin{proof} A direct computation shows for the curvature tensor of $\widetilde{\nabla}$,
\begin{equation*}
\begin{split}
\widetilde{R}(X,Y)\xi & = \widetilde{\nabla}_{X}\widetilde{\nabla}_{Y}\xi- \widetilde{\nabla}_{Y}\widetilde{\nabla}_{X}\xi-  \widetilde{\nabla}_{[X,Y]}\xi \\ & = d\omega(X,Y)\xi- \nabla_{X}(A_{\xi}Y)- \mathrm{II}(X, A_{\xi}Y)
- \omega(Y)A_{\xi}X \\
 & + \nabla_{Y}(A_{\xi}X)+\mathrm{II}(A_{\xi}X, Y)+ \omega(X) A_{\xi}Y+ A_{\xi}([X,Y]),
\end{split}
\end{equation*}
where $X,Y\in \mathfrak{X}(M).$ A direct computation shows
$$
h(-\mathrm{II}(X, A_{\xi}Y)+\mathrm{II}( A_{\xi}X,Y) , \xi)=
-g( A_{\xi}X, A_{\xi}Y)+g( A_{\xi}X, A_{\xi}Y)=0.
$$
Now, taking into account that $M$ is a codimension two spacelike immersion, the term $-\mathrm{II}(X, A_{\xi}Y)+\mathrm{II}(Y, A_{\xi}X)$ must be collinear with $\xi$. Therefore, the first normality condition for $\widetilde{\nabla}$ reduces to
$$
-\nabla_{X}(A_{\xi}Y)
- \omega(Y)A_{\xi}X+
\nabla_{Y}(A_{\xi}X)+ \omega(X) A_{\xi}Y+ A_{\xi}([X,Y])=0
$$
which is equivalently written as (\ref{cond1}).

Assume now  equation (\ref{cond1}) holds.  From (\ref{180921})   and Remark \ref{241121E},  the second normality condition for $\widetilde{\nabla}$ is equivalent to
$$
\sum_{i=1}^{n}g\Big(W(E_{i}, X )Y, E_{i} \Big)=\frac{1}{\lambda^{2}}\sum_{i=1}^{n}\widetilde{g}\Big( \widetilde{R}(E_{i},X)A_{\xi}Y, A_{\xi}E_{i}\Big)=0,
$$
where $(E_{1}, \dots, E_{n})$ is a local orthonormal frame with respect to $g$.
By means of the Gauss equation (\ref{241121C}), we have

\begin{equation*}
\begin{split}
\sum_{i=1}^{n}\widetilde{g}\Big( \widetilde{R}(E_{i},X)A_{\xi}Y, A_{\xi}E_{i}\Big) & = \sum_{i=1}^{n}g\Big( R(E_{i}, X)A_{\xi}Y, A_{\xi}E_{i}\Big)+ \widetilde{g}\Big(\mathrm{II}(E_{i},A_{\xi}Y), \mathrm{II}(X,A_{\xi}E_{i})\Big) \\ & -\widetilde{g}\Big( \mathrm{II}(E_{i},A_{\xi}E_{i}), \mathrm{II}(X,A_{\xi}Y)\Big).
\end{split}
\end{equation*}

Now,  taking into account (\ref{270221B}),  straightforward computations show that
$$
\sum_{i=1}^{n} \widetilde{g}\Big(\mathrm{II}(E_{i},A_{\xi}Y), \mathrm{II}(X,A_{\xi}E_{i})\Big)=-2 \lambda^{2}g(X,  A_{\eta}(A_{\xi}Y))
$$
and
$$
\sum_{i=1}^{n}\widetilde{g}\Big( \mathrm{II}(E_{i},A_{\xi}E_{i}), \mathrm{II}(X,A_{\xi}Y)\Big)=-\lambda^{2}\Big(\mathrm{trace}(A_{\xi}\circ A_{\eta})g(X,Y)+ n\, g(X, A_{\eta}(A_{\xi}Y))\Big).
$$
On the other hand,  for the curvature term we get
$$
\sum_{i=1}^{n}g\Big( R(E_{i}, X)A_{\xi}Y, A_{\xi}E_{i}\Big)=\lambda^{2}\sum_{i=1}^{n}g\Big( R(E_{i}, X)PY, PE_{i}\Big)=\lambda^{2}\mathrm{Ric}^{*}(X, PY).
$$
Therefore,  the second normality condition for $\widetilde{\nabla}$ writes as follows
$$
\mathrm{Ric}^{*}(X, PY)+\mathrm{trace}(A_{\xi}\circ A_{\eta})g(X,Y)+ (n-2) g(X, A_{\eta}(A_{\xi}Y))=0.
$$
Since $P^{2}=\mathrm{Id}$,  one easily shows that this formula is equivalent to (\ref{cond2}).
\end{proof}

\begin{remark}
{\rm For a Lorentzian manifold $(\widetilde{M}^{n+2}, g)$ of constant sectional curvature, the equation (\ref{cond1}) is a direct consequence of the Codazzi equation. Therefore, for a spacelike immersion $\Psi:M^n\rightarrow \widetilde{M}^{n+2}$, where $\widetilde{M}^{n+2}$ has constant sectional curvature, the associated tractor has normal tractor connection if and only if (\ref{cond2}) holds.}
\end{remark}

In the particular case that $\xi$ is an umbilical direction with $A_{\xi}=\lambda\cdot \mathrm{Id}$, we have $\lambda=\widetilde{g}(\mathbf{H}, \xi)$ and then, Theorem \ref{251121A} reads as follows.

\begin{corollary}\label{261121A}
Let  $(\mathcal{T}, \mathcal{T}^{1}, \widetilde{g}, \widetilde{\nabla})$ be the associated tractor to the pair $(\Psi:M^n\rightarrow \widetilde{M}^{n+2}, \xi)$ with $\xi$ an umbilical direction with $A_{\xi}=\lambda\cdot \mathrm{Id}$ and $\lambda> 0$. Assume there is a lightlike normal vector field $\eta \in \mathfrak{X}^{\perp}(M^n)$ such that $\widetilde{g}(\xi, \eta)=-1$.
Then, the tractor connection $\widetilde{\nabla}$
is normal if and only if 
$\omega=\frac{1}{\lambda}d\lambda$ and 
for every $X,Y\in \mathfrak{X}(M)$, the  Ricci tensor of $g$ satisfies
\begin{equation}\label{cond4}
\mathrm{Ric}(X, Y)=\frac{n}{2}\| \mathbf{H}\|^{2}\,g(X,Y)- (n-2) \widetilde{g}(\mathbf{H}, \xi) g(X, A_{\eta}Y).
\end{equation}

\end{corollary}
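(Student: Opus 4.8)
The plan is to derive this Corollary directly from Theorem \ref{251121A} by specializing to the umbilical situation $A_{\xi}=\lambda\cdot\mathrm{Id}$, in which $P=\frac{1}{\lambda}A_{\xi}=\mathrm{Id}$ and hence $\mathrm{Ric}^{*}=\mathrm{Ric}$. Since $A_{\xi}^{2}=\lambda^{2}\cdot\mathrm{Id}$, the hypotheses of Theorem \ref{251121A} are met, so normality of $\widetilde{\nabla}$ is equivalent to conditions (\ref{cond1}) and (\ref{cond2}); it only remains to rewrite each of these in the umbilical case.

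First I would treat condition (\ref{cond1}). For $A_{\xi}=\lambda\cdot\mathrm{Id}$ a short computation gives $(\nabla_{X}A_{\xi})(Y)=d\lambda(X)\,Y$, so the left-hand side of (\ref{cond1}) equals $d\lambda(X)\,Y-d\lambda(Y)\,X$ while the right-hand side equals $\lambda\big(\omega(X)\,Y-\omega(Y)\,X\big)$. Rearranging, (\ref{cond1}) becomes $\big(d\lambda(X)-\lambda\omega(X)\big)Y=\big(d\lambda(Y)-\lambda\omega(Y)\big)X$ for all $X,Y$. Choosing $X,Y$ linearly independent (possible since $n\geq 2$) forces both coefficients to vanish, so (\ref{cond1}) is equivalent to $\omega=\frac{1}{\lambda}d\lambda$, the first condition of the Corollary.

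Next I would treat condition (\ref{cond2}). With $P=\mathrm{Id}$ and $\mathrm{Ric}^{*}=\mathrm{Ric}$, and using $A_{\xi}=\lambda\cdot\mathrm{Id}$ so that $\mathrm{trace}(A_{\xi}\circ A_{\eta})=\lambda\,\mathrm{trace}(A_{\eta})$, equation (\ref{cond2}) reads
$$
\mathrm{Ric}(X,Y)=-\lambda\,\mathrm{trace}(A_{\eta})\,g(X,Y)-(n-2)\lambda\,g(X,A_{\eta}Y).
$$
The remaining task is purely algebraic: to express the two scalar coefficients in terms of the mean curvature vector. From (\ref{H}) together with $\mathrm{trace}(A_{\xi})=n\lambda$ one obtains $\mathbf{H}=-\frac{1}{n}\mathrm{trace}(A_{\eta})\,\xi-\lambda\,\eta$. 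Using the lightlike relations $\widetilde{g}(\xi,\xi)=\widetilde{g}(\eta,\eta)=0$ and $\widetilde{g}(\xi,\eta)=-1$, I compute $\widetilde{g}(\mathbf{H},\xi)=\lambda$ and $\|\mathbf{H}\|^{2}=-\frac{2\lambda}{n}\mathrm{trace}(A_{\eta})$, whence $\lambda\,\mathrm{trace}(A_{\eta})=-\frac{n}{2}\|\mathbf{H}\|^{2}$. Substituting these two identities into the displayed equation yields exactly (\ref{cond4}).

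The argument is a straightforward specialization, so I do not anticipate a genuine obstacle; the only points requiring care are the bookkeeping of the mean-curvature identities in the last paragraph and the (mild) use of $n\geq 2$ when eliminating condition (\ref{cond1}).
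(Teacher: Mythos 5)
Your proposal is correct and follows exactly the paper's route: the paper obtains the Corollary as a direct specialization of Theorem \ref{251121A} to the umbilical case, noting $\lambda=\widetilde{g}(\mathbf{H},\xi)$, which is precisely what you do. Your write-up merely makes explicit the details the paper leaves implicit, namely that (\ref{cond1}) collapses to $\omega=\frac{1}{\lambda}d\lambda$ and that the identities $\widetilde{g}(\mathbf{H},\xi)=\lambda$ and $\lambda\,\mathrm{trace}(A_{\eta})=-\frac{n}{2}\|\mathbf{H}\|^{2}$ (from equation (\ref{H})) turn (\ref{cond2}) into (\ref{cond4}).
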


\begin{remark}
{\rm Under the assumptions of Corollary \ref{261121A}, the scalar curvature of $g$ is $S=n(n-1)\| \mathbf{H}\|^{2}$. This formula widely generalizes to \cite[Cor. 4.5]{PPR}. For  $\mathrm{dim}\, M=n\geq 3$, the Schouten tensor
is (compare with \cite[Theor. 5.6]{MP})
$$
P^{g}(X,Y)=- \widetilde{g}(\mathbf{H}, \xi)\,g(X, A_{\eta}Y).
$$
The spacelike submanifold $(M,g)$ is Einstein if and only if $\Psi:M^n\rightarrow \widetilde{M}^{n+2}$ is totally umbilical. In such a case, we have $\mathrm{Ric}=(n-1)\| \mathbf{H}\|^{2}\,g.$ For $n=2$, equation (\ref{cond4}) reduces to $\mathrm{Ric}(X, Y)=\| \mathbf{H}\|^{2}\,g(X,Y)$. Therefore, equation (\ref{cond4}) holds if and only if $|\mathbf{H}\|^{2}=K^g$, where $K^g$ is the Gauss curvature of the induced metric $g$.}
\end{remark}

\begin{remark}
{\rm Let us consider $\widetilde{M}^{n+2}=\mathbb{L}^{n+2}$, where $\mathbb{L}^{n+2}$ denotes the $(n+2)$-dimensional Lorentz-Minkowski spacetime. Then, the condition $d\omega=\frac{1}{\lambda}d\lambda$ with $\lambda >0$ is equivalent to $\Psi:M^n\rightarrow \widetilde{M}^{n+2}$ factors, up to a translation, through the lightlike cone $\mathcal{N}^{n+1}=\{v \in \mathbb{L}^{n+1}: \langle v, v\rangle=0, v\neq 0\}$ \cite{PR13}. A classical result due to Brinkmann \cite{Br} states that a simply connected Riemannian manifold $(M^n, g)$ with $n\geq 3$ is (locally) conformally flat if and only if can be isometrically immersed into $\mathbb{L}^{n+2}$ through the lightlike cone $\mathcal{N}^{n+1}$ (see \cite{AD} for a proof in a modern form). It is a direct consequence of the Uniformation Theorem that every $2$-dimensional simply connected Riemannian manifold $(M^2, g)$ can be isometrically immersed into $\mathbb{L}^{4}$ through the lightlike cone $\mathcal{N}^{3}$.
For spacelike submanifolds $\Psi\colon M^n \to \mathbb{L}^{n+2}$ through the lightlike cone $\mathcal{N}^{n+1}$, the position vector field $\Psi \in \mathfrak{X}^{\perp}(M^n)$ is lightlike with $A_{\Psi}=-\mathrm{Id}$. Therefore, for $\xi=- \Psi$ and from the Gauss equation, we have for the Ricci tensor of the induced metric $g$ (see \cite{PPR} and take into account that the normalization condition in \cite{PPR}  is $\widetilde{g}(\xi, \eta)=1$),
\begin{equation}\label{021221A}
\mathrm{Ric}(X,Y)=ng(A_{\mathbf{H}}X, Y)+2g(X, A_{\eta}Y). 
\end{equation}
Now, since $\lambda=1$, a direct computation shows that equations (\ref{cond4}) and (\ref{021221A}) are the same. Therefore, as it is well-known for conformally flat Riemannian manifolds, we get a tractor normal connection by means of an isometric immersion through the lightlike cone of Lorentz-Minkowski spacetime.
}
\end{remark}

\section{Example}

Let $(M^{n},g)$ be a Riemannian manifold and  $B \subset \R^{2}$  an open subset with natural coordinates $(\rho, s)$.  Let  $\varphi \in C^{\infty}(B)$ be a smooth function with $\varphi >0$.  We write $\pi_{B}$ and $\pi_{M}$ for the projections from $\widetilde{M}^{n+2}:=B\times M^{n}$  onto $B$ and $M$, respectively.
Let us consider  the family of Lorentzian warped metrics on $\widetilde{M}$, \cite[Chap. 7]{One83}
\begin{equation}\label{291121A}
\widetilde{g}=\pi_{B}^{*}(g_{B}) + \varphi(\rho, s)^{2}\,\pi_{M}^{*}(g),
\end{equation}
where $g_{B}$ is any Lorentzian metric on $B$ such that there are two lightlike  vector fields fields  $\xi, \eta \in \mathfrak{X}(B)$ with $g_{B}(\xi, \eta)=-1$. We use the same notation for  vector fields on $B$ or $M^{n}$ and for theirs lifts to the product manifold $\widetilde{M}$.

Assume there is $(\rho_{0}, s_{0})\in B$ such that $ \varphi(\rho_{0}, s_{0})=1$.  In such a case, 
the spacelike immersion $\Psi \colon M^{n} \to \widetilde{M}^{n+2}$ given by $\Psi(x)=(\rho_{0},s_{0}, x)$ satisfies $\Psi^{*}(\widetilde{g})=g$ and the vector fields $\xi$  and $\eta$ are
lightlike normal vector fields along $\Psi$ with $\widetilde{g}(\xi, \eta)=-1$.
Since $\widetilde{g}$ is a warped metric,  one can apply \cite[Prop. 7.35]{One83} to obtain for every $X \in \mathfrak{X}(M)$,
\begin{equation}\label{02122021}
\widetilde{\nabla}_{X}\xi=(\xi_{(\rho_{0}, s_{0})} \varphi )\cdot X, \quad \mathrm{ and } \quad \widetilde{\nabla}_{X}\eta=(\eta_{(\rho_{0}, s_{0})} \varphi )\cdot X.
\end{equation}
Moreover, the mean curvature vector field of $\Psi$ is given by
$ \mathbf{H}=- \nabla \varphi (\rho_{0}, s_{0})
$, where $\nabla \varphi$ is the gradient of $\varphi$ in $B$.
Under this notation, we have.

\begin{proposition}\label{131221A}
Let  $(\mathcal{T}, \mathcal{T}^{1}, \widetilde{g}, \widetilde{\nabla})$ be the associated tractor to the pair 
$$(\Psi:M^n\rightarrow \widetilde{M}^{n+2}, \xi)$$ as above. 
Assume  $\xi_{(\rho_{0}, s_{0})} \varphi \neq 0$. Then, $(\mathcal{T}, \mathcal{T}^{1}, \widetilde{g}, \widetilde{\nabla})$ is always standard for $g$ and the tractor connection $\widetilde{\nabla}$
is normal if and only if $(M^{n},g)$ is an Einstein manifold with 
$\mathrm{Ric}=\Lambda\, g$.  In such a case, we have  
$
\Lambda=(n-1)\|\mathbf{H}\|^{2}=(n-1)\|\nabla\varphi (\rho_{0}, s_{0})\|^{2}.
$
\end{proposition}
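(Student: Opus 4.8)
The plan is to extract all of the extrinsic data of $\Psi$ directly from the warped-product formulas (\ref{02122021}) and then feed them into Proposition \ref{241121D} and Corollary \ref{261121A}. First I would abbreviate $a:=\xi_{(\rho_{0},s_{0})}\varphi$ and $b:=\eta_{(\rho_{0},s_{0})}\varphi$, which are genuine \emph{constants} since they are values at the single point $(\rho_{0},s_{0})$. For $X\in\mathfrak{X}(M)$ the vector $a\,X$ is tangent to $\Psi(M)$, so comparing (\ref{02122021}) with the Weingarten formula $\widetilde{\nabla}_{X}\xi=-A_{\xi}X+\nabla^{\perp}_{X}\xi$ and splitting into tangential and normal parts forces $A_{\xi}=-a\cdot\mathrm{Id}$ and $\nabla^{\perp}\xi=0$ (hence $\omega=0$); likewise $A_{\eta}=-b\cdot\mathrm{Id}$. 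Thus $\xi$ is an umbilical direction whose eigenvalue $\lambda=-a$ is a non-vanishing constant, precisely because of the hypothesis $\xi_{(\rho_{0},s_{0})}\varphi\neq0$.

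For the first assertion, $A_{\xi}=-a\cdot\mathrm{Id}$ is non-singular and satisfies $A_{\xi}^{2}=a^{2}\cdot\mathrm{Id}$, so Proposition \ref{241121D} immediately yields that $\widetilde{\nabla}$ is a tractor connection and that $(\mathcal{T},\mathcal{T}^{1},\widetilde{g},\widetilde{\nabla})$ is standard for $g$, with no extra hypothesis. For the normality statement I would invoke Corollary \ref{261121A} in the umbilical case (after replacing the pair $(\xi,\eta)$ by $(-\xi,-\eta)$ if necessary so that $\lambda>0$, a change which preserves $\widetilde{g}(\xi,\eta)=-1$ and leaves the line $\mathcal{T}^{1}=\mathrm{Span}(\xi)$ and the bundle metric untouched). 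Its first condition $\omega=\frac{1}{\lambda}d\lambda$ holds trivially, since $\omega=0$ and $\lambda$ is constant, so only the Ricci condition (\ref{cond4}) remains to be analysed.

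It then suffices to evaluate the right-hand side of (\ref{cond4}). Using (\ref{H}) with $\mathrm{trace}(A_{\xi})=-na$ and $\mathrm{trace}(A_{\eta})=-nb$ gives $\mathbf{H}=b\,\xi+a\,\eta$, and the lightlike relations $\widetilde{g}(\xi,\xi)=\widetilde{g}(\eta,\eta)=0$, $\widetilde{g}(\xi,\eta)=-1$ give $\|\mathbf{H}\|^{2}=-2ab$ and $\widetilde{g}(\mathbf{H},\xi)=-a$. Substituting these together with $A_{\eta}=-b\cdot\mathrm{Id}$ into (\ref{cond4}) collapses its right-hand side to $-2(n-1)ab\,g(X,Y)=(n-1)\|\mathbf{H}\|^{2}\,g(X,Y)$. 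Hence (\ref{cond4}) is equivalent to $\mathrm{Ric}=(n-1)\|\mathbf{H}\|^{2}\,g$, i.e. to $(M^{n},g)$ being Einstein with $\Lambda=(n-1)\|\mathbf{H}\|^{2}$. Finally, because $\varphi(\rho_{0},s_{0})=1$ the restriction of $\widetilde{g}$ to the normal directions coincides with $g_{B}$, so $\|\mathbf{H}\|^{2}=\|\nabla\varphi(\rho_{0},s_{0})\|^{2}$, giving the stated value of $\Lambda$.

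All the computations are routine applications of the machinery already established; no deep obstacle arises. The only points requiring care are the bookkeeping of the lightlike inner products, so that the two separate terms of (\ref{cond4}) combine correctly into a single multiple of $g$, and the observation that $a,b$ are constants, which is what trivialises the first normality condition. Checking the sign conventions when passing from $A_{\xi}=-a\cdot\mathrm{Id}$ to the positive-$\lambda$ hypothesis of Corollary \ref{261121A} is the last thing to verify.
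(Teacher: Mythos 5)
Your proof is correct and follows essentially the same route as the paper: read off $A_{\xi}=-(\xi_{(\rho_0,s_0)}\varphi)\,\mathrm{Id}$, $A_{\eta}=-(\eta_{(\rho_0,s_0)}\varphi)\,\mathrm{Id}$ and $\omega=0$ from (\ref{02122021}), get standardness from Proposition \ref{241121D}, and reduce normality to (\ref{cond4}), whose right-hand side collapses to $-2(n-1)\,\xi_{(\rho_0,s_0)}\varphi\;\eta_{(\rho_0,s_0)}\varphi\,g(X,Y)=(n-1)\|\mathbf{H}\|^{2}g(X,Y)$, exactly the paper's displayed formula. Your extra remark about replacing $(\xi,\eta)$ by $(-\xi,-\eta)$ to meet the $\lambda>0$ hypothesis of Corollary \ref{261121A} is a point of care that the paper silently skips, and it is a valid (and welcome) addition since this change preserves $\mathcal{T}^{1}$, the normalization $\widetilde{g}(\xi,\eta)=-1$, and condition (\ref{cond4}).
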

\begin{proof}
From (\ref{02122021}) we get $\omega=0$, so the condition $\omega=\frac{1}{\lambda}d\lambda$ is direct since $\lambda=-\xi_{(\rho_{0}, s_{0})}\varphi$ is constant along $M$. The condition on the Ricci tensor is obtained from the equations (\ref{H}), (\ref{cond4}) and (\ref{02122021}), from which it follows that
$$
\mathrm{Ric}(X,Y)=-2(n-1)\,\xi_{(\rho_{0}, s_{0})}\varphi\,\eta_{(\rho_{0}, s_{0})}\varphi \,g(X,Y),
$$
for $X,Y\in\mathfrak{X}(M)$. Since $\|\nabla\varphi (\rho_{0}, s_{0})\|^{2}=-2\,\xi_{(\rho_{0}, s_{0})}\varphi\,\eta_{(\rho_{0}, s_{0})}\varphi$ we get
the announced result.

\end{proof}

\begin{remark}\label{141221B}
{\rm

Let us recall the notion of ambient manifolds for Riemannian conformal structures as appeared in \cite{Cap}. Let $(M, [g])$ be a conformal Riemannian structure on a $(n\ge2)$-dimensional manifold $M$.
Let ${\bf N}\subset \mathrm{Sym}^{+}(M)$ be the fiber bundle of scales of the conformal structure and $\pi :{\bf N}\to M^{n}$ the projection. Every section of $\pi$ provides a  representative $g\in [g]$.
On ${\bf N}$ there are a free $\R_{>0}$-action (called dilations) defined by $\delta(s)(u)= s^{2}u$ for $u \in {\bf N}$ and a tautological symmetric $2$-tensor $h$ given by $h(\xi, \eta)=u(T_{u}\pi \cdot \xi, T_{u}\pi \cdot \eta)$ for  $\xi, \eta \in T_{u}{\bf N}$. An ambient manifold is a $(n+2)$-dimensional manifold ${\bf M}$ endowed with
a free $\R_{>0}$-action also denotes by $\delta$, and a $\R_{>0}$-equivariant embedding $i:{\bf N} \to {\bf M}$, that is, $i(\delta(s)(u))=\delta(s)(i(u))$. 
Then, an ambient metric is a Lorentzian metric $g_{_{L}}$ on ${\bf M}$ such that the following conditions hold.
\begin{enumerate}
\item The metric is homogeneous of degree $2$ with respect to the $\R_{>0}$-action. That is, if $Z\in \mathfrak{X}({\bf M})$ denotes the fundamental vector field generating the $\R_{>0}$-action, then we have $\mathcal{L}_{Z}g_{_{L}}=2g_{_{L}}$.
\item For all $u=g_{x}\in {\bf N}$ and $\xi, \eta \in T_{u}{\bf N}$, we have $i^{*}(g_{_{L}})(\xi, \eta)=g_{x}(T_{u}\pi \cdot\xi, T_{u}\pi \cdot\eta)$. That is, $i^{*}(g_{_{L}})=h$.
\end{enumerate}
The original definition by Fefferman-Graham of an ambient metric in \cite{F-G} adds certain conditions of Ricci-flatness for the metric $g_{_{L}}$.
In \cite{Cap}, the authors have related the ambient metric construction with conformal normal tractor bundles as follows.
\begin{quote}
Let $g_{_{L}}$ be an ambient metric such that $d\alpha|_{\bf N}=0$ where $\alpha$ is the one-form metrically equivalent to the infinitesimal generator $Z\in \mathfrak{X}(\bf{M})$  of the action $\delta$. Then, the standard tractor bundle $(\mathcal{T}, g_{_{L}}, \nabla)$ is normal if and only if $\mathrm{Ric}^{g_{_{L}}}(U,V)|_{\bf{N}}=0$ for every  $U,V\in \mathfrak{X}({\bf N})$. 
\end{quote}
When $n\geq 3$ and the conformal structure $[g]$ contains an Einstein metric $g$ with $\mathrm{Ric}=\Lambda \, g$, then a Ricci flat ambient metric can be explicitly given by ${\bf M}=B\times M$ where $B=(-\varepsilon , +\varepsilon) \times \R_{>0}$ endowed  with the Lorentzian metric
 $g_{_{B}}=ds \otimes d(\rho s)+ d(\rho s) \otimes ds$ for the natural coordinates $(s,\rho)$
 and $ g_{_{\bf{M}}}$
is the Lorentzian warped metric
$$
 g_{_{\bf{M}}}=g_{_{B}}+s^{2}\left(1+ \frac{\Lambda}{2(n-1)} \rho\right)^{2}g.
$$
Proposition \ref{131221A} gives a partial converse to this fact.
}
\end{remark}

{\small DPTO. DE MATEMÁTICA APLICADA, UNIVERSIDAD DE MÁLAGA, 29071-MÁLAGA (SPAIN)}

{\it E-mail address:} \textcolor[gray]{0.5}{ruyman@uma.es}

{\it E-mail address:} \textcolor[gray]{0.5}{fpalomo@uma.es}
\end{document}